\documentclass[letterpaper, 10 pt, conference]{ieeeconf}  

\IEEEoverridecommandlockouts                              
\usepackage[letterpaper,
            left=.75in,
            right=.75in,
            top=1in,
            bottom=.75in]{geometry}
\usepackage{subcaption}
\usepackage{cite}
\usepackage{amsmath,amssymb,amsfonts}
\usepackage{algpseudocode}
\usepackage{algorithm}
\usepackage{graphicx}
\usepackage{textcomp}
\usepackage{hhline}
\usepackage[abs]{overpic}
\usepackage{mathrsfs}
\usepackage{xcolor}
\usepackage{hyperref}
\usepackage{lettrine}
\definecolor{violet}{rgb}{.5,0,.5}
\definecolor{orange}{rgb}{1,.65,0}

\newcommand{\tr}{\text{tr}}

\newcommand{\calE}{\mathcal{E}}

\newcommand{\bbR}{\mathbb{R}}
\newcommand{\bbS}{\mathbb{S}}

\newtheorem{theorem}{Theorem}[section]

\newtheorem{remark}[theorem]{Remark}
\newtheorem{assumption}[theorem]{Assumption}
\newtheorem{prop}[theorem]{Proposition}
\newtheorem{definition}[theorem]{Definition}

\def\BibTeX{{\rm B\kern-.05em{\sc i\kern-.025em b}\kern-.08em
    T\kern-.1667em\lower.7ex\hbox{E}\kern-.125emX}}
\begin{document}
\title{Linear Output Feedback and Guidance Synthesis:\\ Framework for Jointly Design Guidance and Robust Feedback}
\title{Joint Observer-Based Output-Feedback\\Robust Guidance and Control Funnel Synthesis}
\author{Cheng Chang,  Shiva Shakeri, \IEEEmembership{Members, IEEE} and Mehran Mesbahi \IEEEmembership{Fellow, IEEE}
\thanks{The research of the authors have been supported by Air Force Office of Scientific Research grant FA9550-26-1-B206.}
\thanks{The authors are with the William E. Boeing Department of Aeronautics and Astronautics, University of Washington. Emails: \{chang53+sshakeri+mesbahi\}@uw.edu}}

\maketitle

\begin{abstract}
A joint guidance and robust observer-based output-feedback synthesis framework is developed for discrete-time nonlinear systems subject to bounded process disturbances and measurement noise. The method co-designs a reference trajectory, time-varying Luenberger observer and observer-based linear output-feedback gains, and coupled ellipsoidal funnels that certify conditioned-invariant bounds on tracking and estimation errors. Nonlinear residuals are bounded by incremental quadratic constraints (QCs) parameterized by local Lipschitz constants, estimated via sampling approaches. The coupled stability conditions are formulated as linear matrix inequality (LMI) surrogates and solved using sequential convex programming with proximal regularization (prox-SCP) via semidefinite programming (SDP). Simulations of a unicycle model under persistent sensory noise and system disturbances demonstrate robust tracking and funnel-certified constraint satisfaction with respect to the jointly designed reference trajectory.
\end{abstract}

\section{Introduction}
\label{sec:introduction}

\lettrine{P}lanning a reference trajectory and control synthesis are often treated as separate design problems.
For safety-critical systems operating under uncertainty, this separation is
untenable: the trajectory must leave sufficient margin for the controller to ``absorb''
disturbances, and the controller must be designed knowing what margin it has
\cite{mayne_mpc,majumdar_tedrake}.
Robust trajectory synthesis addresses this design dependency by co-designing the reference
with an invariant set, a funnel or tube, that bounds the worst-case deviations
of the true state under bounded disturbances
\cite{majumdar_tedrake,mayne_tube,rakovic_mrpi}.
If this set remains inside the feasible region along the trajectory, then constraint satisfaction is certified for all admissible disturbance realizations before execution \cite{mayne_mpc,taylor}, a guarantee of direct relevance to
powered descent vehicles, autonomous ground robots, and agile aerial
platforms \cite{acikmese_ploen,herbert_fastrack,singh_contraction}.

Recent work has extended this design paradigm to nonlinear discrete-time systems,
co-designing trajectories and ellipsoidal funnels via successive
convexification and linear matrix inequalities (LMIs)~\cite{twewan1,taylor,leeman,manchester_kuindersma}.
However, these works often assume that the full state information is
directly available for feedback control.
In practice, sensors measure a noisy proper subset of the state,
and the estimation error enters the closed-loop deviation dynamics through
the observer \cite{mayne_of,xu_obs}.
A funnel synthesized under the perfect state knowledge assumption
does not, in general,
remain invariant when the controller operates on estimated states
\cite{atassi_khalil}.

Separation principle that for linear systems allows a stable observer
and a stable controller to be combined without compromising the stability
of the closed-loop system, does not extend to nonlinear systems operating under hard
constraints~\cite{atassi_khalil}: in such a setting, estimation and tracking errors are coupled
through the closed-loop dynamics, and any valid invariance certificate must
 explicitly account for this coupling.
 In this direction, Mayne {\em et al.}~\cite{mayne_of} has shown 
 that for constrained linear
discrete-time systems a Luenberger observer \cite{luenberger} and a
tube-based controller can be combined by maintaining two separate
invariant sets.
For nonlinear systems, the work of Xu {\em et al.}~\cite{xu_obs} has proposed
co-designing observer and controller gains via LMIs under incremental quadratic constraints
\cite{bechet,QC}, but for stabilization at a fixed equilibrium, without
separate geometric bounds on tracking and estimation errors.
Contraction-based methods \cite{lohmiller_slotine,ccm,singh_contraction}, on the other hand, 
certify robust tracking along arbitrary trajectories but assume full state
feedback.

To this end, in this work we propose a framework that jointly synthesizes a reference trajectory, 
time-varying observer and observer-based output-feedback gains, 
and coupled ellipsoidal funnels for discrete-time nonlinear systems with bounded process and measurement disturbances.
The two funnels are separately certified but jointly constrained: one bounds the 
tracking error and the other bounds the estimation error, and the LMI conditions
derived via the S-procedure and the Schur complement explicitly account for
their interaction.
Nonlinear dynamics are handled via incremental quadratic constraints
\cite{bechet,QC} with local Lipschitz constants estimated by sampling on
the funnel boundaries \cite{taylor,taylor_phd,twewan1}, and the
nonconvex joint synthesis problem is solved via prox-SCP  \cite{scvx_main,Moreau_regularization}. The contributions and comparison against existing methods are summarized in Table \ref{table: comparison}.

\begin{table}[t]
\caption{Comparison between existing methods and the current work}
\label{table: comparison}
\begin{center}
\begin{tabular}{ccccc}

Methods & Nominal & Feedback & Funnel types &  
\\
&traj opt&information&& \\
\hhline{=====}
Our work & Single opt & Observer-  & State/ Observer &  \\ 
&&feedback&&
\\\hline
Original & None & Full-state & State/ Control & \\
Funnel\cite{taylor}&&&& \\\hline

Joint full-state \cite{twewan1} & Bilevel opt  & Full-state & State/ Control &  \\
feedback &  &  &  &\\ \hline
Observer& None  & Observer- & Joint state +   &   \\
feedback \cite{xu_obs}&&feedback&observer*&

\\\hhline{=====}
\multicolumn{5}{p{242pt}}{*The joint state+observer funnel in \cite{xu_obs} uses single funnel to evaluate the combined effects from the state and estimation errors }
\end{tabular}
\end{center}
\end{table}

The outline of this paper is as follows. In \S\ref{sec:prelim}, we provide an introduction to the structured nonlinear system and the conditions for the invariance and stability of incremental systems, that will be used in \S\ref{sec:problem} for formulating the synthesis problem. In \S\ref{sec:joint-funnel-synthesis}, the nonconvex synthesis problem is reformulated 
as local convex subproblems that are solved iteratively by the prox-SCP algorithm presented in \S\ref{sec:algorithm}. Numerical simulations are provided in \S\ref{sec:numerical-simulations} prior to concluding the discussion on the limitations and advantages of the proposed framework in \S\ref{sec:conclusion}.
\section{Preliminaries}
\label{sec:prelim}
In this section, we provide relevant technical background and constructs that are used for our subsequent discussion.

\subsection{Open-Loop System Dynamics}
\label{subsec: dyanmics}
Consider the continuous-time nonlinear system with linear measurement output captured by the continuous system mapping $f: \bbR^{n_x}\times\bbR^{n_u} \times \bbR^{n_w}\to \bbR^{n_x}$, subject to state  constraints defined by $h:\bbR ^{n_x} \to \bbR$ 
\begin{equation}\label{eq:ct-dynamics}
\begin{aligned}
    \dot{x} = f(x, u,w), \; y = Cx + Dv,  \; h(x)\leq 0,
\end{aligned}
\end{equation}
where $x \in \bbR^{n_x}$, $y \in \bbR^{n_y}$, $u \in \bbR^{n_u}$, $w \in \bbR^{n_{w}}$, and $v \in \bbR^{n_v}$ are the state, output, control input, process noise, and measurement noise terms, respectively; $C \in \bbR^{n_y \times n_x}$ is the measurement channel and $D \in \bbR^{n_y \times n_v}$ designates the measurement noise channel.
In order to quantitatively analyze the effects of nonlinearities in the system, we restrict our attention to systems that can be decomposed as follows:

\begin{assumption}[Structured and differentiable system]\label{as:str nonliner}
The dynamics in \eqref{eq:ct-dynamics} can be separated into linear and structured nonlinear form by the nonlinearity selecting channels $E \in \bbR^{n_x \times n_p},\quad C_q \in \bbR^{n_q \times n_x},\quad D_q \in \bbR^{n_q\times n_u}$, and $G_q \in \bbR^{n_q \times n_w}$ as follows:
\begin{equation}
\begin{aligned}
    \dot{x} &= A_{\tiny\mbox{lin}} x + B_{\tiny{\mbox{lin}}}u + G_{\tiny\mbox{lin}}w + Ep ,\\
    p &= \phi(q), \quad q = C_qx + D_qu + G_q w,
\end{aligned}
\end{equation}
where $p\in \bbR^{n_p},\quad q \in \bbR^{n_q}$ can be regarded as the structured output and input from the linear process, respectively with the continuous and differentiable nonlinear process mapping $\phi:\bbR^{n_q}\to \bbR^{n_p}$, and $A_{\tiny\mbox{lin}} \in \bbR^{n_x \times n_x} ,\quad B_{\tiny{\mbox{lin}}} \in \bbR^{n_x \times n_u} ,\quad G_{\tiny\mbox{lin}} \in \bbR^{n_x \times n_w}$ represent the system channels for the linear process. For more on the system decomposition, see \cite{taewan} for detailed examples.
\end{assumption}

By the differentiability of $\phi$ made under Assumption \ref{as:str nonliner}, $\phi$ is locally Lipschitz over closed sets within its domain \cite{rudin}.

The finite dimensional representation of the trajectory is
obtained via discretization of \eqref{eq:ct-dynamics} with a time step $dt$ leading to,
\begin{equation}\label{eq:dt-dynamics}
\begin{aligned}
    x_{k+1} = f_d(x_k, u_k, w_k), \, y_k = Cx_k + Dv_k, \, h(x_k) \leq 0,
\end{aligned}
\end{equation}
where $k = 0, \ldots, T{-}1$ is the discrete time index, $w_k \in \bbR^{n_w}$ denotes the discrete-time disturbance induced by discretization (e.g., integrated process noise over $[t_k, t_{k+1}]$), and $v_k \in  \bbR^{n_v}$ represents the sensor noise measured at time $k$. In general, more than one set of realizations of channel matrices $E,\;C_q,\;D_q$, and $G_q$ exists that are not necessary minimal.
\begin{assumption}[Exact channel preservation]
Assume there exist nonlinear channel selection matrices $C_q,\;D_q$, and $G_q$ that remain unchanged between the discrete and continuous representation of the system, and $E_k  = h E, \; h>0$ is the rescaled discrete nonlinear input matrix. 
\end{assumption}
In order to simplify the notation, we denote the time set as $\mathcal{N}_0^t = \{k: k =0,\ldots,t\}$.
For robust feedback synthesis through quadratic constraints (QCs) on the Lur'e like systems over the bounded uncertainty sets, we further assume the following.
\begin{assumption}[Normalized bounded disturbances]\label{as:noise}
For all $k$, $w_k^\top  w_k \leq 1$ and $v_k^\top v_k \leq 1$. 
\end{assumption}
Note that the choice of normalization bounds can be arbitrary positive scalars, as long as it is properly scaled with respect to the funnel to be introduced subsequently in \S\ref{subsec:funnels} \cite{taylor,taewan,twewan1}.
\begin{assumption}[Differentiable discrete system]\label{as:diff}
For all $k$, $f_d$ is differentiable with respect to $x_k,\; u_k$, and $w_k$. Similarly, the functional representation of state constraints $h$ is assumed to be differentiable with respect to $x_k$.
\end{assumption}
Since the reference trajectory is designed over the deterministic system, we consider the following undisturbed form of \eqref{eq:dt-dynamics}.
Let $(\bar{x}_k, \bar{u}_k)_{k=0}^{T}$ be a reference system trajectory satisfying the noise-free dynamics,
\begin{equation}\label{eq:reference}
\begin{aligned}
        \bar{x}_{k+1} &= f_d(\bar{x}_k, \bar{u}_k, 0), \quad \bar{x}_0 = x_0, \quad \bar{x}_T = x_T, \\ 
        h(\bar{x}_k) & < 0,
\end{aligned}
\end{equation}
and define the time-varying Jacobians evaluated at $(\bar{x}_k, \bar{u}_k, 0)$ by $A_k = \frac{\partial f_d}{\partial x}|_{\bar{x}_k, \bar{u}_k, 0}$, $B_k = \frac{\partial f_d}{\partial u}|_{\bar{x}_k, \bar{u}_k, 0}$, and $G_k = \frac{\partial f_d}{\partial w}|_{\bar{x}_k, \bar{u}_k, 0}$.
Note the strict feasibility requirement for the reference trajectory 
as it has to be ``certified'' with \emph{nonzero margins}; this
will be further elaborated upon in \S\ref{subsec:funnels}.

\subsection{Output-Feedback Incremental Dynamics}
\label{subsec:closed-loop}
Consider the \emph{incremental} output feedback law augmented with the feedforward reference term,
\begin{equation}\label{eq:feedback}
    u_k = \bar{u}_k + K_k(\hat{x}_k - \bar{x}_k),
\end{equation}
with time-varying gain $K_k \in \bbR^{n_u \times n_x}$, where $\hat{x}_k$ is the state estimate provided by the \emph{Luenberger}-type observer,
\begin{equation}\label{eq:observer}
    \hat{x}_{k+1} = f_d(\hat{x}_k, u_k, 0) + L_k(y_k - C\hat{x}_k),
\end{equation}
with gain $L_k \in \bbR^{n_x \times n_y}$. The observer uses a noise-free prediction model and evolves using the innovation $L_k(y_k - C\hat{x}_k)$.

Next, define the state deviation $\eta_k = x_k - \bar{x}_k$, estimation error $e_k = x_k - \hat{x}_k$, and the discretized nonlinearity $\phi_k$ at time $k$. The structured ``decompositional'' nonlinearity in the system dynamics yields an exact incremental representation, with the nonlinear residual terms captured through $\phi_k$ \cite{taewan,xu_obs}. Using $\hat{x}_k - \bar{x}_k = \eta_k - e_k$, we obtain,
\begin{equation} \label{eq:deviation sytem}
\begin{aligned}
    \eta_{k+1}  &= (A_k+B_kK_k)\eta_k - B_kK_k e_k + G_kw_k + E_k (p_k - \bar{p_k}) , \\
    e_{k+1}  &= (A
    _k-L_kC)e_k+ G_kw_k - L_kDv_k + E_k(p_k - \hat{p}_k), \\
    q_k &= C_qx_k + D_qu_k+G_qw_k, \\
    \bar{q}_k & = C_q \bar{x}_k + D_q \bar{u}_k,\;\hat{q}_k = C_q \hat{x}_k + D_q u_k,
\end{aligned}
\end{equation}
with $p_k = \phi_k(q_k)$, $\bar{p}_k = \phi_k(\bar{q}_k)$, and $\hat{p}_k = \phi_k(\hat{q}_k)$ represent the output of the system nonlinearities from the reference system \eqref{eq:reference} and the observer \eqref{eq:observer}. To this end, define $\delta p_k = p_k - \bar{p}_k,\quad \delta q_k = q_k - \bar{q}_k$, $\Delta p_k = p_k - \hat{p}_k $, and $\Delta q_k = q_k - \hat{q}_k$ in the context of the Lipschitz condition to be discussed. Note that the observer term is coupled with the process uncertainty through $G_kw_k$. 

From Assumption \eqref{as:str nonliner}, $\phi_k$ is locally Lipschitz and satisfies, 
\begin{equation}\label{eq:lips}
    \|\delta p_k \|_2^2 \leq \gamma^2_k \| \delta q_k \|_2^2, \quad\|\Delta p_k \|_2^2 \leq \gamma^2_k \| \Delta q_k \|_2^2, 
\end{equation}
over any closed set for $(x,u,w)$ \cite{taylor}, and this will be subsequently used as the matrix multiplier certification for the robust feedback in \S\ref{subsec: quadratic stability}.
\begin{remark}
    Through the structured nonlinearity construction on the incremental system, one can consider the stability of the ``linear'' part \eqref{eq:deviation sytem} subject to the nonlinear feedback and uncertainty effects, as with the standard Lur'e system~\cite{QC}.
\end{remark}


\subsection{Ellipsoidal Funnels}
To efficiently represent the deviation set, we consider bounds of the deviation to be ellipsoidal funnels (tubes).
\label{subsec:funnels}

\begin{definition}[Ellipsoidal funnel]\label{def:funnel}
Given a reference trajectory $(\bar{x}_k)_{k=0}^T$ and a sequence of positive definite matrices $(Q_k)_{k=0}^T$, $Q_k \in \bbS^{n_x}_{++}$, the \emph{state funnel} is the collection of ellipsoidal sets,
\begin{equation}\label{eq:funnel-eta}
    \calE_k^\eta = \big\{ \eta \in \bbR^{n_x} : \eta^\top Q_k^{-1}\eta \leq 1 \big\}, \quad \forall k \in \mathcal{N}^T_0.
\end{equation}
Similarly, given $(P_k)_{k=0}^T$, $P_k \in \bbS^{n_x}_{++}$, the \emph{observer funnel} is the collection,
\begin{equation}\label{eq:funnel-e}
    \calE_k^e = \big\{ e \in \bbR^{n_x} : e^\top P_k^{-1}e \leq 1 \big\}, \quad \forall k \in \mathcal{N}^T_0.
\end{equation}
The volume of $\calE_k^\eta$ is proportional to $\det(Q_k)^{1/2}$; we use $\tr(Q_k)$ as a convex surrogate for this quantity.
\end{definition}
Before defining the feasibility of the resulting state funnel, we denote the feasible set for state at time $k$ as,
\begin{equation}
    \mathcal{F}_k = \{x: h(x) \leq0\}.
\end{equation}
\begin{definition}[Funnel feasibility and invariance]\label{def:invariance}
The state funnel sequence $(\calE_k^\eta)_{k=0}^T$, centered at $(\bar{x}_k)_{k=0}^T$, is \emph{feasible} if
\[
\bar{x}_k \oplus \calE_k^\eta \subseteq \mathcal{F}_k,
\qquad \forall k \in \mathcal{N}_0^T.
\]

The funnel pair $(\calE_k^\eta,\calE_k^e)$ is said to be \emph{deviation-dominant invariant} if, for every $k \in \mathcal{N}_0^{T-1}$,
$
\eta_k \in \calE_k^\eta, \;
e_k \in \calE_k^e,\;
V_k^e \geq \max\{\|v_k\|_2^2,\;\|w_k\|^2_2\},\;
V_k^\eta \geq \|w_k\|_2^2,
$
imply that
\[
\eta_{k+1} \in \calE_{k+1}^\eta,
\qquad
e_{k+1} \in \calE_{k+1}^e.
\]

Equivalently, once the state and observer deviations lie inside their respective funnels at time $k$, they remain inside the corresponding funnels at subsequent time steps, provided that the funnel levels dominate over the uncertainty.
\end{definition}

\begin{remark}
The lower-bound conditions in Definition~\ref{def:invariance} are introduced to exclude the ``uncertainty-dominant'' regime. If these conditions are removed, the funnel levels may become too small relative to the uncertainty, in which case the invariance may fail even when $(\eta_k,e_k)$ lies inside $(\calE_k^\eta,\calE_k^e)$. Thus, the definition only guarantees forward invariance when the funnel bounds are sufficiently large compared with the uncertainty level.
\end{remark}
\subsection{Incremental Stabilities}\label{subsec: quadratic stability}
Define the combined deviation states $z_k = [\eta_k^\top,e_k^\top]^\top$. Furthermore, for simplicity, we denote $V_k = z^\top_{k} \mbox{blkdiag}(Q^{-1}_{k}, P^{-1}_{k})z_{k}$, $V^e_k = e^\top_k P^{-1}_k e_k$, and $V_k^{\eta} = \eta^\top_k Q^{-1}_k \eta_k$. The \emph{quadratic incremental} stability conditions with $\sigma \in (0,1)$ and contraction rates $\alpha,\;\beta \in (0,1)$ are defined as the Lyapunov-like conditions,
\begin{equation}
\begin{aligned} \label{eq: quadratic stability}
    V^{\eta}_{k+1} &\leq \alpha V^{\eta}_k + \sigma V^e_k ,\quad V^\eta_k \geq \|w_k\|_2^2 \\
     \forall & (\delta p_k, \delta q_k) \in \{ (\delta p, \delta q): \|\delta p\|_2^2 \leq \gamma^2_k \|\delta q\|_2^2\} \\
    V_{k+1}^e &\leq \beta V_k^e , \quad V^e_k \geq \max \{\|v_k\|_2^2,\;\|w_k\|_2^2\} \\
    \forall & (\Delta p_k, \Delta q_k) \in \{(\Delta p, \Delta q): \| \Delta p\|^2_2 \leq\gamma^2_k \|\Delta q\|_2^2 \} \\
    0<&\beta + \sigma\leq \alpha <1,\quad \sigma +\alpha \leq 1.
\end{aligned}
\end{equation} 

The above condition can be interpreted as follows: whenever the system dynamics satisfies the set defined by the QCs arising from the nonlinearity and the corresponding lower-bound conditions on the Lyapunov functions, the incremental systems will be stable and invariant.  

\begin{prop} \label{prop 1}
Assume that for any $k\in \mathcal{N}_0^{T-1}$ such that $V^{\eta}_k\leq1$ and $V^e_k \leq1$, the system is in the \emph{deviation dominant} region with $V^{\eta}_k \geq \|w_k\|_2^2$, $V^e_k \geq \max \{\|v_k\|_2^2, \;\|w_k\|^2_2\}$.
Furthermore, assume that the Lipschitz conditions \eqref{eq:lips} and the following quadratic inequalities hold for some $\tau_x,\;\tau_y,\;v_x,\;v_y>0$ (via the S-procedure \cite{QC,bechet,taylor} on \eqref{eq: quadratic stability}):
\begin{equation}
\begin{aligned} \label{s-procedure a.}
    V^{\eta}_{k+1} &- \alpha V^{\eta}_k - \sigma V^e_k + \\\tau_x(&V^{\eta}_k - \|w_k\|_2^2)  + v_x(\gamma^2_k \|\delta q_k\|_2^2 - \|\delta p_k\|_2^2) \leq 0  ,
\end{aligned}
\end{equation}
and  
\begin{equation}
\begin{aligned}\label{s-procedure b.}
    V^{e}_{k+1} &- \beta V^{e}_k + \tau_x(V^{e}_k - \|w_k\|_2^2) + \\ \tau_y(&V^{e}_k - \|v_k\|_2^2)  + v_y(\gamma_k^2\|\Delta q_k\|_2^2 - \|\Delta p_k\|_2^2) \leq 0.
\end{aligned}
\end{equation}
Then the observer error contracts at $\beta$- rate and the combined system deviation contract at $\alpha$-rate with the funnel pair $(\calE_k^{\eta},\calE_k^e)$ being receding-invariant in the sense of Definitions \ref{def:funnel} and \ref{def:invariance}. 

\end{prop}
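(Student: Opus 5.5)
The argument is a standard S-procedure sign argument applied to each of the two quadratic inequalities in turn, followed by an induction over $k$. Fix $k\in\mathcal{N}_0^{T-1}$ and suppose $\eta_k\in\calE_k^\eta$ and $e_k\in\calE_k^e$, that is, $V_k^\eta\le 1$ and $V_k^e\le 1$. By the standing hypothesis of the proposition we are then in the deviation-dominant region. This gives three sign facts:
\[
V_k^\eta-\|w_k\|_2^2\ge 0,\qquad V_k^e-\|w_k\|_2^2\ge 0,\qquad V_k^e-\|v_k\|_2^2\ge 0 .
\]
The Lipschitz bounds \eqref{eq:lips} give two further sign facts:
\[
\gamma_k^2\|\delta q_k\|_2^2-\|\delta p_k\|_2^2\ge 0,\qquad \gamma_k^2\|\Delta q_k\|_2^2-\|\Delta p_k\|_2^2\ge 0 .
\]
The multipliers $\tau_x,\tau_y,v_x,v_y$ are positive, so every multiplier term in \eqref{s-procedure a.} and \eqref{s-procedure b.} is nonnegative. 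Dropping these terms from \eqref{s-procedure a.} yields $V_{k+1}^\eta\le \alpha V_k^\eta+\sigma V_k^e$. Dropping them from \eqref{s-procedure b.} yields $V_{k+1}^e\le \beta V_k^e$. These are exactly the incremental conditions \eqref{eq: quadratic stability}, valid for every admissible nonlinearity pair and every admissible noise realization.

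Next I will turn these rates into invariance. Since $\beta\in(0,1)$ and $V_k^e\le1$, we get $V_{k+1}^e\le\beta V_k^e\le\beta<1$, so $e_{k+1}\in\calE_{k+1}^e$. The error also contracts geometrically: $V_{k+j}^e\le\beta^j V_k^e$ for as long as the hypotheses persist. For the state funnel, $V_{k+1}^\eta\le \alpha V_k^\eta+\sigma V_k^e\le\alpha+\sigma\le 1$, using the assumed constraint $\sigma+\alpha\le1$. Hence $\eta_{k+1}\in\calE_{k+1}^\eta$, which is the deviation-dominant invariance of Definition~\ref{def:invariance}. Induction on $k$ then propagates both memberships over $\mathcal{N}_0^{T}$. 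The induction is valid because membership at step $k+1$ re-enables the deviation-dominance hypothesis at step $k+1$.

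To justify the claimed $\alpha$-rate for the combined deviation, I will use the Lyapunov function $W_k=V_k^\eta+cV_k^e$ with weight $c>0$. From the two recursions,
\[
W_{k+1}\le \alpha V_k^\eta+(\sigma+c\beta)V_k^e .
\]
Choosing $c=1$ gives $\sigma+c\beta=\sigma+\beta\le\alpha$, by the assumed ordering $\beta+\sigma\le\alpha$. Therefore $W_{k+1}\le\alpha W_k$, and since $W_k=V_k$ when $c=1$, this reads $V_{k+1}\le\alpha V_k$ for $V_k=z_k^\top\mathrm{blkdiag}(Q_k^{-1},P_k^{-1})z_k$.

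The main obstacle is interpretive rather than technical: what "receding-invariant" and the "$\alpha$-rate" mean when deviation-dominance fails. Once $V^e$ contracts below the noise level, the S-procedure terms are no longer sign-definite. I will therefore state the contraction claims only on the deviation-dominant region, where the hypothesis places them. Outside that region, invariance of the unit sublevel sets is still what the funnel certificate needs, and it holds by the step-$(k+1)$ argument above, applied only while the standing hypothesis of the proposition is in force.
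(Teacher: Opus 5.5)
Your proposal is correct and follows the paper's proof essentially step by step. You drop the nonnegative multiplier terms, using deviation dominance and the Lipschitz bounds \eqref{eq:lips}, to get $V^\eta_{k+1}\le\alpha V^\eta_k+\sigma V^e_k$ and $V^e_{k+1}\le\beta V^e_k$, then use $\alpha+\sigma\le 1$ for invariance and $\beta+\sigma\le\alpha$ on $V_k=V^\eta_k+V^e_k$ for the $\alpha$-rate, exactly as the paper does (your weighted $W_k$ with $c=1$ is the paper's sum, and your explicit induction over $k$ only spells out what the paper leaves implicit).
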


\begin{proof}
With positive scalars $\tau_x,\; v_x$ and conditions assumed, \eqref{s-procedure a.} implies that $V^{\eta}_{k+1} \leq \alpha V^{\eta}_k + \sigma V^e_k$. Similar arguments can be made to show that $\tau_x,\;\tau_y, \; v_y > 0$ together with \eqref{s-procedure b.} ensure that $V^e_{k+1} \leq \beta V^e_k \leq 1$ under the provided assumptions.

With the contraction Lyapunov function $V^e_k$ for the observer incremental system and $V^{\eta}_k, V^e_k \leq 1$, $V^{\eta}_{k+1} \leq \alpha V^{\eta}_k + \sigma V^e_k \leq \sigma + \alpha \leq 1$, the invariance of the control incremental system is ensured in the sense of \ref{def:invariance}. 

Lastly,  $\beta + \sigma \leq \alpha$ implies 
\[
\begin{aligned}
    V_{k+1} &= V^{\eta}_{k+1}  + V^e_{k+1}  \leq (\alpha V^{\eta}_k + \sigma V^e_k) + \beta V^e_k ,\\
    & = \alpha V^{\eta}_k + (\beta + \sigma) V^e_k  \leq \alpha(V^{\eta}_k + V^e_k) \leq \alpha V_k.
\end{aligned}
\] 
Thus the combined deviation contracts at $\alpha$ rate.
\end{proof}
\begin{remark}
Suppose that the observer contraction inequality in \eqref{eq: quadratic stability} together with the QC assumptions remain valid in the ``transient region'' prior to observer funnel entry.
Then, even if the initial estimation error lies outside the observer funnel defined in Definition~\ref{def:funnel}, the contraction of the observer incremental system implies that the observer deviation decreases and reaches the observer funnel after finitely many steps. Once the observer deviation enters the funnel, the same condition together with the lower-bound assumptions yields invariance for the observer funnel.

Moreover, if at the time of observer-funnel entry, the state deviation also satisfies the lower-bound condition and $V_k^{\eta}\leq 1$, then \eqref{eq: quadratic stability} implies that $V_{k+1}^{\eta}\leq 1$, that in turn, implies the invariance of the state funnel from that step onward in the deviation dominant region \ref{prop 1}. Thus, for large initial estimation errors, the funnel pair may be interpreted as eventually invariant, provided the dominance conditions continue to hold during the transient phase. For related discrete-time funnel invariance analysis, see~\cite{taewan,twewan1}.
\end{remark}
\section{Problem Statement}
\label{sec:problem}
A nonconvex conceptual optimization for solving the joint synthesis problem is now provided, that can be reformulated and solved as a sequence of convex subproblems in \S\ref{subsec: affine system}. Given the discrete-time dynamics \eqref{eq:dt-dynamics} and boundary conditions $(x_{\tiny \mbox{t0}}, x_{\tiny \mbox{tf}})$, design a reference trajectory $(\bar{x}_k,\bar{u}_k)_{k=0}^{T-1}$, gains $(K_k,L_k)_{k=0}^{T-1}$, and funnel shapes $(Q_k,P_k)_{k=0}^T$ such that: i) \eqref{eq:reference} holds; ii) the state funnel is feasible and the closed-loop deviation system \eqref{eq:deviation sytem} is funnel-invariant in the sense of Definition~\ref{def:invariance}; iii) the funnel-volume surrogate and nominal control effort are minimized. This leads to the following optimization problem.

\textbf{Problem 1: Nonconvex Joint Robust Guidance and Output-Feedback Synthesis Problem}
\begin{equation}
\begin{aligned} \label{problem: 1}
    \min_{\bar{\textbf{x}},\bar{\textbf{u}},\textbf{Q},\textbf{P},\textbf{K},\textbf{L}} &\sum_{k=0}^{T-1} \|\bar{u}_k\|_2^2 + \sum_{k=0}^T (\tr(Q_k) + \tr(P_k))  \\
    \text{s.t. }
    & \bar{x}_0 = x_{\tiny\mbox{t0}},\;\bar{x}_T = x_{\tiny \mbox{tf}}
    \\
    & \bar{x}_{k+1} = f_d(\bar{x}_k,\bar{u}_k,0),\; \forall k \in \mathcal{N}^{T-1}_0 \\
    & h(\bar{x}_k) < 0,\;\forall k \in \mathcal{N}^T_0\\
    & \bar{x}_k \oplus \calE_k^\eta\subseteq \mathcal{F}_k,\; \forall k \in \mathcal{N}^T_0 \\
    & Q_k \succ 0,\; P_k \succ 0,\;k \in \mathcal{N}^T_0\\
    & \text{Incremental stability and invariance conditions }\\ 
    & \text{in \eqref{eq: quadratic stability} hold for} \;\forall k \in \mathcal{N}^{T-1}_0,
\end{aligned}
\end{equation}
where $\bar{\textbf{x}} = (\bar{x})_{k=0}^T$, $\bar{\textbf{u}} = (\bar{u})_{k=0}^{T-1}$, $\textbf{Q} = (Q)_{k=0}^T$, $\textbf{P} = (P)_{k=0}^T$, $\textbf{K} = (K_k)_{k=0}^{T-1}$, and $\textbf{L} = (L)^{T-1}_{k=0}$ represent the concatenated variables.Let us designate the objective of this optimization problem as $F(\textbf{Z})$, and $\textbf{Z} = (\bar{\textbf{x}}, \bar{\textbf{u}}, \textbf{Q},\textbf{P},\textbf{K},\textbf{L})$ as the compact representation of all variables.

The joint design over both domains (guidance and robust feedback gains) allows the resulting reference trajectory to be robustly \emph{certifiable} in the sense of tracking stability and feasibility. 
Notice that this problem is nonconvex due to bilinearities among $(\bar{x}, \bar{u}, K, L, Q, P)$ and the nonlinear system dynamics. In the next section, we will reformulate the system constraints and stability conditions to enable an efficient solution by Interior Point Methods-based semidefinite programming (SDP) solvers.

\section{Joint Funnel/ Guidance Synthesis}
\label{sec:joint-funnel-synthesis}
This section provides a framework for formulating the feasibility and stability conditions stated in \S\ref{sec:prelim} as affine functions and LMIs, thereby yielding a local convex set representation of the constraints and conditions listed in \eqref{problem: 1}.

\subsection{Affine System Representation}
\label{subsec: affine system}
Recall that with the system Jacobians $A_k,B_k$, the undisturbed affine system assumes the form,
\begin{equation} \label{eq: affine system}
    \bar{x}_{k+1} +\delta x_{k+1} \approx f_d(\bar{x}_k,\bar{u}_k,0) + A_k \delta x_k + B_k \delta u_k.
\end{equation}
This affine representation of the system dynamics is important for approximating the nonconvex set described by nonlinear dynamics in terms
of an affine set in the incremental variables.

\subsection{LMI Surrogates for Funnel and System Feasibility}
\label{subsec:LMIs for feasibility}
From \cite{taylor,twewan1}, the feasibility conditions given in Definition \ref{def:invariance} can be formulated by the following quadratic matrix inequality (QMI),
\begin{equation}\label{eq:constraint_lmi_schur}
\begin{bmatrix}
    (b_k^x - a_k^{x,\top} \bar{x}_k)^2 & *\\
    Q_k a_{k}^x & Q_k
\end{bmatrix} \succeq 0 ,
\end{equation}
where $*$ denotes the transpose of the symmetric elements, $a^x_k = \nabla h(\bar{x}_k)$  is the gradient of the state constraints, $b^x_k = a^{x,\top}_k\bar{x}_k  - h(\bar{x}_k)$ are the constants for the half planes. 
Note that the bilinear terms, $(b_k - a_k^\top(\cdot))^2 = (\Phi_k(\cdot))^2$ can be approximated as $\Phi_k((\cdot)+\delta(\cdot))^2 \ \approx \Phi_k^2(\cdot) +2\Phi_k(\cdot) \delta\Phi_k$, and $\delta \Phi_k = -a_k^\top \delta(\cdot)$. Hence, the QMI can be cast into the standard LMI as, 
\begin{equation}\label{eq: constraints lmis}
\begin{aligned}
    &\begin{bmatrix}
    \Phi^{x,2}_k(\bar{x}_k) + 2\Phi_k^x(\bar{x}_k)\delta\Phi^x_k & *\\
    (Q_k+\delta Q_k) a_{k}^x & Q_k + \delta Q_k
\end{bmatrix}\succeq 0, \\
\end{aligned}
\end{equation}
with decision variables being the incremental quantities. 
\subsection{LMI Surrogates for Stability and Invariance}
\label{subsec:LMIs for stability}
We now derive the equivalent quadratic matrix inequality forms of \eqref{s-procedure a.} and \eqref{s-procedure b.} using Schur complements, thereby transforming the infinite-dimensional constraints into more ``tractable'' forms. In this direction, let us first define $A_k^{cl} = A_k + B_kK_k$, $B^{cl}_k = -B_kK_k$ for the closed incremental control system. 
Note that by expanding the terms, \eqref{s-procedure a.} can be written in the matrix form as,
\begin{equation}
\begin{aligned} \label{eq: control QMI 1}
    ((S_{\eta}^{\tiny\mbox{lin}} + S_{\eta}^{\tiny\mbox{QC}}) + M_\eta^\top Q^{-1}_{k+1}M_{\eta}) \preceq 0 ,
\end{aligned}
\end{equation}
where $M_{\eta} = [A^{cl}_k, B^{cl}_k, G_k,E_k]$, and 
\begin{equation}
\begin{aligned}
    S_{\eta}^{\tiny\mbox{lin}} & = 
    \begin{bmatrix}
        (\tau_x - \alpha)Q_k^{-1} & *&*&*\\
        0& -\sigma P^{-1}_k & *&* \\
        0&0& -\tau_xI &*\\
        0&0&0&0
    \end{bmatrix}, \\
    S_{\eta}^{\tiny\mbox{QC}} & = v_x \gamma ^2_k
    \begin{bmatrix}
         H^\top_{\delta,1} H_{\delta,1} & *& *& *\\
         H^\top_{\delta,2}H_{\delta,1} & H^\top_{\delta,2}H_{\delta,2} &*&* \\
         G^\top_qH_{\delta,1} &G^\top_qH_{\delta,2} &G^\top_qG_q    &*\\
        0&0&0&-\frac{I}{\gamma_k^2}
    \end{bmatrix},
\end{aligned}
\end{equation} with $H_{\delta,1} = C_q + D_qK_k$ and $H_{\delta,2} = -D_qK_k$.
Moreover, we note that \eqref{eq: control QMI 1} can be written equally as
\begin{equation}
\begin{bmatrix}
    (S_{\eta}^{\tiny\mbox{lin}} + S_{\eta}^{\tiny\mbox{QC}}) &* \\
    M_{\eta} & -Q_{k+1} 
\end{bmatrix}\preceq 0 .
\end{equation}
Since $S_{\eta}^{\tiny\mbox{lin}}$ contains $P^{-1}_k$ and $Q^{-1}_k$ terms, use the congruence with matrix $T_{\eta} = \mbox{blkdiag}(Q_k,P_k,I,I,I)$, we write \eqref{eq: control QMI 1} as 
\begin{equation}
    T^\top_{\eta}((S_{\eta}^{\tiny\mbox{lin}} + S_{\eta}^{\tiny\mbox{QC}}) + M_\eta^\top Q^{-1}_{k+1}M_{\eta}) T_{\eta} \preceq 0 ,
\end{equation}
with full QMI expansion as 
\begin{equation}
\begin{aligned}\label{eq: control QMI 2}
    &\begin{bmatrix}
        (\tau_x - \alpha)Q_k & *&*&*&*\\
        0&-\sigma P_k &*&*&* \\
        0&0& -\tau_x I &*&* \\
        0&0&0&0&* \\
        A^{cl}_kQ_k & B^{cl}_kP_k & G_k & E_k & -Q_{k+1}
    \end{bmatrix} +\\ 
    v_x \gamma ^2_k
    &\begin{bmatrix}
         H^{Q,\top}_{\delta} H_{\delta}^Q & *& *& *&*\\
        H_{\delta}^{P,\top}H^Q_{\delta} & H_{\delta}^{P,\top}H_{\delta}^{P} &*&* &*\\
         G^\top_qH^Q_{\delta} &G^\top_q H^P_{\delta} &G^\top_qG_q    &*&*\\
        0&0&0&-\frac{I}{\gamma_k^2}&* \\
        0&0&0&0&0&
    \end{bmatrix} \preceq 0 ,
\end{aligned}
\end{equation}
where $H^Q_{\delta} = H_{\delta,1}Q_k$ and $H^P_{\delta} = H_{\delta,2}P_k$.
Similarly, when applying the same procedural steps (Schur complement and congruence operation) to the \eqref{s-procedure b.}, the following quadratic matrix inequality for the incremental observer system is obtained:
\begin{equation}
\begin{aligned}\label{eq: observer QMI 1}
    &\begin{bmatrix}
        (\tau_x +\tau_y - \beta)P_k & *&*&*&*\\
        0&-\tau_x I&*&*&* \\
        0&0& -\tau_y I &*&* \\
        0&0&0&0&* \\
        A^{ob}_kP_k & G_k & L^{ob}_k & E_k & -P_{k+1}
    \end{bmatrix} +\\ 
    v_y \gamma ^2_k
    &\begin{bmatrix}
         H^{P,\top}_{\Delta} H_{\Delta}^P & *& *& *&*\\
        G_q^\top H_{\Delta}^P& G^\top_qG_q &*&* &*\\
        0 &0 &0   &*&*\\
        0&0&0&-\frac{I}{\gamma_k^2}&* \\
        0&0&0&0&0&
    \end{bmatrix} \preceq 0 ,
\end{aligned}
\end{equation}
where $A^{ob}_k = A_k - L_kC$, $L^{ob}_k = -L_kD$, and $H^P_{\Delta} = C_q P_k$. 
Note that \eqref{eq: control QMI 2} and \eqref{eq: observer QMI 1} still contain variable products in certain entries, including $A^{cl}_kQ_k$, $B^{cl}_kP_k$, $A^{ob}_k P_k$, $H^{Q,\top}_{\delta} H_{\delta}^Q$, $G^\top_qH^Q_{\delta}$, and $H^{P,\top}_{\Delta} H_{\Delta}^P$. To ensure the convexity of the feasible set defined by this inequality, the local affine approximation is performed for each of the product terms with small increments, leading to the formulation for each iterate,
\begin{equation}
\begin{aligned} \label{eq: affine variables}
    &(A^{cl}_k + \delta A^{cl}_k)(Q_k+\delta Q_k) \approx A^{cl}_kQ_k  + B_k\delta K_kQ_k + A^{cl}_k \delta Q_k, \\
    &(A^{ob}_k + \delta A^{ob}_k)(P_k + \delta P_k) \approx A^{ob}_kP_k  - \delta L_kCP_k + A^{ob}_k \delta P_k, \\
    &(B^{cl}_k + \delta B^{cl}_k)( P_k + \delta P_k) \approx B^{cl}_kP_k - B_k \delta K_kP_k - B_kK_k \delta P_k, \\
    &(H^{P}_{\delta} + \delta H^P_{\delta})^\top(H^{Q}_{\delta} + \delta H^Q_{\delta}) \approx H^{P,\top}_{\delta}H^{Q}_{\delta}  +\\& \delta H^{P,\top}_{\delta}H^{Q}_{\delta} + H^{P,\top}_{\delta} \delta H^{Q}_{\delta} ,\\
    & (H^Q_{\delta} + \delta H^Q_{\delta})^\top(H^Q_{\delta} + \delta H^Q_{\delta}) \approx H^{Q,\top}_{\delta}H^Q_{\delta} + \\&H^{Q,\top}_{\delta}\delta H^Q_{\delta} + \delta H^{Q,\top}_{\delta} H^Q_{\delta} ,\\
    & (H_{\delta}^P + \delta H_{\delta}^P)^\top(H_{\delta}^P + \delta H_{\delta}^P) \approx H^{P,\top}_{\delta}H^P_{\delta} +\\ & \delta H^{P,\top}_{\delta}H^P_{\delta} + H^{P,\top}_{\delta}\delta H^P_{\delta},\\
    & (H^P_{\Delta} + \delta H^P_{\Delta})^\top(H^P_{\Delta} + \delta H^P_{\Delta}) \approx H^{P,\top}_{\Delta}H^P_{\Delta} + \\ & \delta H^{P,\top}_{\Delta}H^P_{\Delta} + H^{P,\top}_{\Delta}\delta H^P_{\Delta},
\end{aligned}
\end{equation} with $\delta H^Q_{\delta} \approx D_q \delta K_kQ_k +(C_q +D_qK_k)\delta Q_k$, $\delta H^P_{\delta} \approx -D_q(\delta K_k P_k + K_k \delta P_k)$, and $\delta H^P_{\Delta} \approx C_q \delta P_k$.
Similar to \eqref{eq: constraints lmis}, we now consider the incremental form of these matrix inequalities, which are LMIs in terms of the incremental quantities.
Therefore, the stability and invariance stated in Proposition \ref{prop 1} are now expressed in terms of LMIs from the local affine approximations of \eqref{eq: control QMI 2} and \eqref{eq: observer QMI 1} through \eqref{eq: affine variables}. Furthermore, if the approximation residue is small (that usually occurs when the solution converges to the feasible ones), then \eqref{eq: control QMI 2} and \eqref{eq: observer QMI 1} will be close to exact through the affine approximation.
\begin{remark}
Notice that the matrix inequalities for stability analysis \eqref{eq: control QMI 2} and \eqref{eq: observer QMI 1} are separated into two parts: Linear system and nonlinear QC parts. If the system described in \eqref{eq:ct-dynamics} is linear, then one can simply set $E = 0$ and $v_x = v_y = 0$ to recover the expression for the linear system. Moreover, unlike the classical linear case where the separation principle applies, the coupling nature of two funnels necessitates the joint design of both funnels and the corresponding gains.
\end{remark}

\subsection{Lipschitz Constant Estimation}\label{subsec: lipschitz}
As the above QCs are enforced through Lipschitz conditions, we must estimate local Lipschitz constants. Discretization can make these constants difficult to express analytically, even when the continuous-time bounds are readily available. Therefore, we adopt the M-problem in $\gamma$- iteration of \cite{taylor_phd} to compute accurate set-wise estimates via sampling. Unlike \cite{taylor_phd} however, we sample a larger set that reflects both sensing and control schemes. Define
\begin{equation} \label{eq: est set}
    \calE_k = \{\bar{x}_k + \delta x:\;\delta x^\top O^{-1}_k \delta x \leq 1\},
\end{equation}
where $O_k =(\sqrt{\lambda_{\tiny \mbox{max}}(Q_k)} + \sqrt{\lambda_{\tiny\mbox{max}}(P_k)})^2 I$; hence, $\calE_k$ is an n-dimensional sphere centered at the reference trajectory with radius determined by the \emph{largest} eigenvalues of the control- and observation-funnel shape matrices with samples taken uniformly on the boundary of $\calE_k$. Finally, we define the sampled Lipschitz constants over the trajectory by $\Gamma = (\gamma_k)_{k=0}^T$. 
As the Lipschitz constant is estimated through the sampling, \S\ref{prop 1} holds empirically, unless a global Lipschitz condition is used, which is, in general, overly conservative.

\subsection{Sequential Convex Programming}
\label{subsec: SCP}
In this section, we use a prox-type SCP methodology~\cite{Moreau_regularization, scvx_main} to solve \eqref{problem: 1}; we briefly discuss the approach below.

Recall that the nonconvex constraints in \eqref{problem: 1} are locally approximated by convex surrogate sets: the affine surrogate \eqref{eq: affine system} for the system dynamics, the LMI surrogate \eqref{eq: constraints lmis} for funnel and system feasibility, and the LMI surrogates of \eqref{eq: control QMI 2} and \eqref{eq: observer QMI 1} for the state and observer funnel stability conditions. These surrogates are constructed with incremental variables $\delta \bar{x}_k,\; \delta \bar{u}_k$ for the system dynamics, $\delta Q_k,\; \delta P_k$ for the funnel shapes, and $\delta K_k,\; \delta L_k$ for the state/output-feedback gains. The prox-SCP method with proximal parameter $\lambda$ is then used to formulate the following convex subproblem for efficiently solving the robust guidance and output-feedback funnel synthesis problem with the operator $\Theta(a_1,\ldots,a_I) = \sum_{i=1}^I \|a_i\|^2$, where $\|\cdot\|$ denotes the $\ell_2$ norm when $a_i$ is a vector and the Frobenius norm when $a_i$ is a matrix.

\textbf{Problem 2: Convex Joint Robust Guidance and Output-Feedback Synthesis Subproblem}
\begin{equation}
\begin{aligned} \label{problem: 2}
    \min_{\delta \textbf{Z}} &\sum_{k=0}^{T-1} \|\bar{u}_k+\delta \bar{u}_k\|_2^2 + \sum_{k=0}^T (\tr(Q_k + \delta Q_k) + \tr(P_k + \delta P_k)) +\\
    & \frac{1}{2 \lambda}\Theta(\delta \textbf{Z})  \\
    \text{s.t. }
    & \bar{x}_0 + \delta\bar{x}_0 = x_{\tiny \mbox{t0}},\; \bar{x}_T + \delta\bar{x}_T = x_{\tiny \mbox{tf}}\\
    & Q_k + \delta Q_k\succ0,\; P_k + \delta P_k \succ 0,\; \forall k \in \mathcal{N}^T_0\\
    & \text{Affine surrogate of \eqref{eq: affine system} holds for system dynamics,}  \\
    &\forall k \in \mathcal{N}^{T-1}_0\\
    & \text{LMI surrogate of \eqref{eq: constraints lmis} holds for state funnel feasibility,} \\
    &\forall k \in \mathcal{N}^{T}_0\\
    & \text{LMI surrogate of \eqref{eq: control QMI 2} \eqref{eq: observer QMI 1} hold for state and } \\
    & \text{observer funnel stabilities, } \forall k \in \mathcal{N}^{T-1}_0,
\end{aligned}
\end{equation}
where $\delta\textbf{Z} = (\delta\bar{\textbf{x}},\delta\bar{\textbf{u}},\delta \textbf{Q},\delta \textbf{P}, \delta \textbf{K}, \delta \textbf{L})$, and each element in $\delta \textbf{Z}$ represents the concatenation of the corresponding incremental variable. Similar to \eqref{problem: 1}, we also define the objective of this problem as $F_{\tiny \mbox{prox}} (\delta \textbf{Z})$. Lastly, we denote the function $F_{\tiny \mbox{cvx}}(\delta \textbf{Z}) = F_{\tiny\mbox{prox}} - \frac{1}{2 \lambda}\Theta(\delta \textbf{Z})$ as the objective function without the regularization term. Note that since the convex subproblem uses LMIs and affine surrogates, the interim solutions may not satisfy the original nonconvex constraints. We also acknowledge that, due to the nature of SCP methods, feasibility is often difficult to guarantee, and a post-solution check is required. For the feasibility of the general convex surrogate methods, please refer to \cite{ch10}.

\section{Algorithm}
\label{sec:algorithm}
A concise iterative algorithm for solving \eqref{problem: 2} will be given with Armijo-style backtracking updating methods with a varying proximal-parameter. 

Define the reduction ratio 
\begin{equation}\label{eq: ratio}
    r = \frac{F(\textbf{Z}^I)- F(\textbf{Z}^I + \delta \textbf{Z})}{|F(\textbf{Z}^I) - F_{\tiny \mbox{cvx}}(\delta\textbf{Z})|},
\end{equation} and the change in the objective values $\Delta F = F(\textbf{Z}^{I+1}) - F(\textbf{Z}^I)$.
The algorithm for solving \eqref{problem: 1} by solving \eqref{problem: 2} iteratively is given as 
\begin{algorithm}[t]
\caption{Prox-SCP algorithm}
\label{alg: prox scp}
\begin{algorithmic}[1]
    
    \State \textbf{Input:} Set the proper initial variables $\textbf{Z}^0$, choose QC constants $\tau_x,\tau_y,v_x,v_y$,  set $\Delta F>\epsilon$, and the desired convergence rate $\alpha, \beta$ that satisfy \eqref{eq: quadratic stability}.
    \While{$|\Delta F | > \epsilon$}
        \State Solve M-problems \cite{taylor_phd} over sets \eqref{eq: est set} to get $\Gamma$
        \State Solve Problem (\ref{problem: 2}) to get $\delta\textbf{Z}$.
        \State Compute $r$ from \eqref{eq: ratio}.
        \If{$r \geq r_{\tiny \mbox{min}}$}
            \State $\textbf{Z}^{I+1} \gets \textbf{Z}^I + \delta \textbf{Z}$
        \Else
            \State Step Rejection 
            \State $\lambda = \omega \lambda$
        \EndIf
        \State $I \gets I +1$ and update $\Delta F$ 
    \EndWhile
    \State \textbf{return} $\textbf{Z}^I$
\end{algorithmic}
\end{algorithm}
with the desired cost reduction ratio $r_{\tiny \mbox{min}} \in (0,1)$, parameter shrinking ratio $\omega \in (0,1)$. If the convex subproblem is  initially infeasible, slack variables can be introduced or increased until a feasible solution is obtained. For more details on penalty methods, please refer to \cite{ch10,hypersonic}. Lastly, the QC constants $\tau_x,\; \tau_y,\; v_x ,\; v_y$ can be chosen to be small $\approx 0.1$ if the user is confident about the uncertainty models for a less restrictive behavior. However, the selection of these hyperparameters still suffers from the drawbacks of most funnel-based approaches: they must be hand-tuned by users \cite{taylor,taylor_phd,twewan1}.

\section{Numerical Simulations}
\label{sec:numerical-simulations}
Consider the unicycle model evolving under system uncertainties, including random disturbances in the system states and noisy position estimates (simulating GPS errors). 
\begin{equation}
\begin{aligned}
    \begin{bmatrix}
        \dot{x}_1 \\ \dot{x}_2 \\ 
        \dot{x}_3
    \end{bmatrix}
     & = 
    \begin{bmatrix}
        u_1 \cos{x_3} \\
        u_1 \sin{x_3}\\
        u_2
    \end{bmatrix} + 
    \begin{bmatrix}
        0.1 & 0 \\
        0 & 0.5 \\
        0 & 0 
    \end{bmatrix}
    \begin{bmatrix}
        w_1 \\ w_2
    \end{bmatrix}, 
    \\
    \begin{bmatrix}
        y_1 \\ y_2 
    \end{bmatrix}
    &= 
    \begin{bmatrix}
        1 & 0 & 0\\
        0 & 1 & 0
    \end{bmatrix}
    \begin{bmatrix}
        x_1 \\ x_2 \\ x_3
    \end{bmatrix} +
    \begin{bmatrix}
        0.3 & 0 \\
        0 & 0.1 
    \end{bmatrix}
    \begin{bmatrix}
        v_1 \\ v_2
    \end{bmatrix},
\end{aligned}
\end{equation}
where $x_1,x_2$ denote the x,y-position of the robots, $x_3$ represents the heading angle, $u_1$ is the linear velocity, and $u_2$ serves as the angular velocity. The state constraint for obstacle avoidance is 
$h(x) = r^2 -\|[x_1,x_2]^\top- [x_{\tiny \mbox{obs}}^i,y_{\tiny \mbox{obs}}^i]^\top\|^2_2  \leq 0 $
with $i$-th obstacle center $[x_{\tiny \mbox{obs}}^i,y_{\tiny \mbox{obs}}^i]^\top$.
Note that all uncertainties are normalized and scaled through the corresponding injection channels, and the asymmetrical uncertainty input channels are selected to highlight the direction-dependent effect of uncertainties. Table \ref{table: parameters} summarizes the parameters used in the simulation, with a total of four cases simulated over joint and decoupled methods.

\begin{table}[t]
\caption{Simulation Parameters}
\label{table: parameters}
\begin{center}
\begin{tabular}{ccccc}

Parameters & Case 1 & Case 2 & Case 3 & Case 4
\\\hhline{=====}
Uncertainty & System  & Observer  & System and & System and \\
Type&&&Observer& Observer
\\ \hline
IC (States) & Uniform  on & $[0,0,0]$ & Uniform on & $0.5[\cos{\frac{9 \pi}{10}},$\\

& the funnel&& the funnel&$\sin{\frac{9 \pi}{10}},0]$\\

\hline
IC (Estimate) & $[0,0,0]$ &Uniform in& $[0,0,0]$ & Uniform in\\
&  & the funnel &  & the funnel 

\\\hhline{=====}
\multicolumn{5}{p{242pt}}{
Obstacles are centered at $[4,3]^\top$ and $[9,3]^\top$ m with
$r=1$ m. Forward-Euler discretization uses $dt=0.067$ s.
$\alpha=0.98$, $\beta=0.8$, and
$\tau_x=\tau_y=v_x=v_y=0.1$.
The maximum eigenvalues QMI violations for the controller and observer are
\eqref{eq: control QMI 2}--\eqref{eq: observer QMI 1}
are $1.35 \times10^{-3}/2.72\times 10^{-4}$ for the joint methods.
}
\end{tabular}
\end{center}
\end{table}

In the first two cases, we isolated the process and measurement uncertainties to study the individual stability properties of each funnel. In the first case, we uniformly sample initial conditions (ICs) around the entry boundary of the process-uncertainty state funnel, while in the second case, we sample them around the measurement-uncertainty estimation funnel. Both cases show rapid convergence of the sampled trajectories, which remain well within the funnel boundaries as shown in Fig. \ref{fig:funnel comparison}.

The third and fourth cases demonstrate the controller's invariance and the observer's convergence, respectively. To distinguish the state and observation funnels, in Fig. \ref{fig:combined funnels}, the state funnels are in gray and
observer funnels are in dark blue. 
For each \emph{reference} trajectory, we define a single state funnel based on the state difference $\eta = x  -\bar{x}$. For each \emph{actual} trajectory, we also define one observation funnel, since observation funnels bound the deviation $e = x - \hat{x}$. Together, these two cases reflect the stability and conditional invariance in \S\ref{sec:problem}.

Despite the visual similarity between decoupled and joint methods, Fig. \ref{fig:states bounds} shows a non-negligible invariance-bound violation, while the proposed joint framework, under the same uncertainty and ICs, satisfies the invariance conditions. In general, bilevel methods are prone to convergence issues even in the biconvex setting \cite{bilevel1}. The code for producing the results can be found at: https://github.com/Justin900308/Demo-code-for-joint-funnel-observer-and-controller-synthesis

\begin{figure}
\centering
\includegraphics[width=0.46\textwidth]{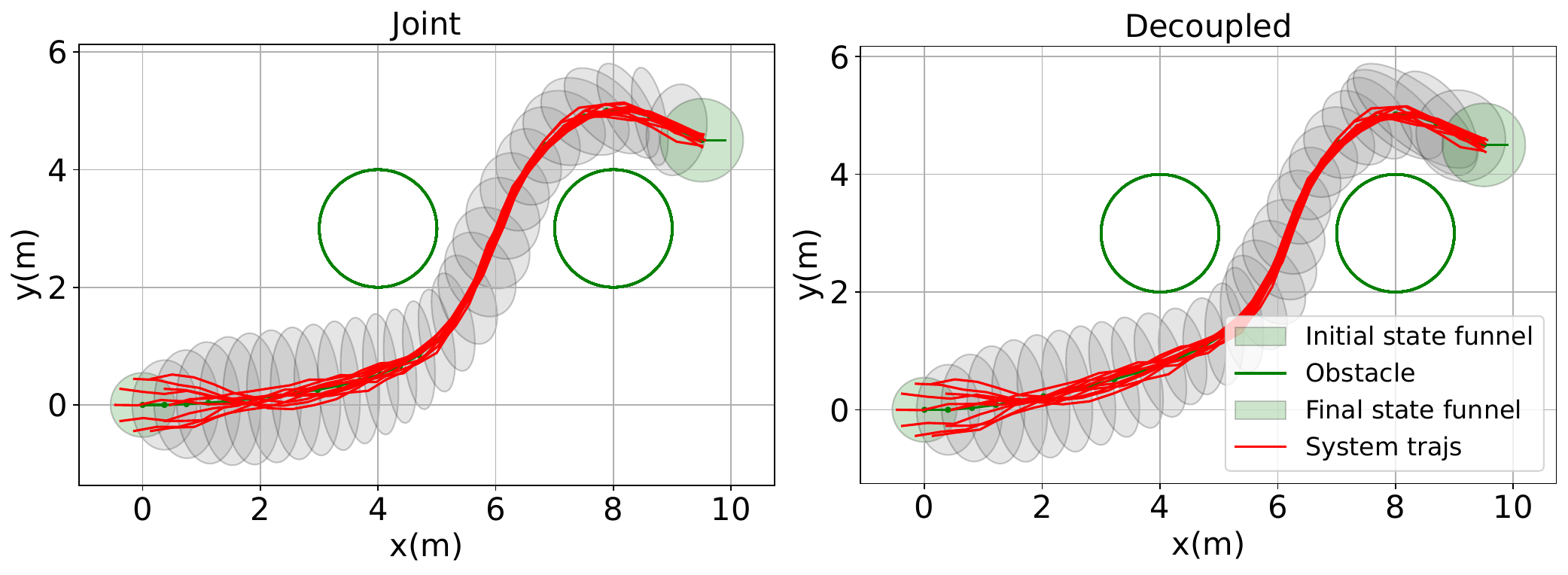}
\includegraphics[width=0.46\textwidth]{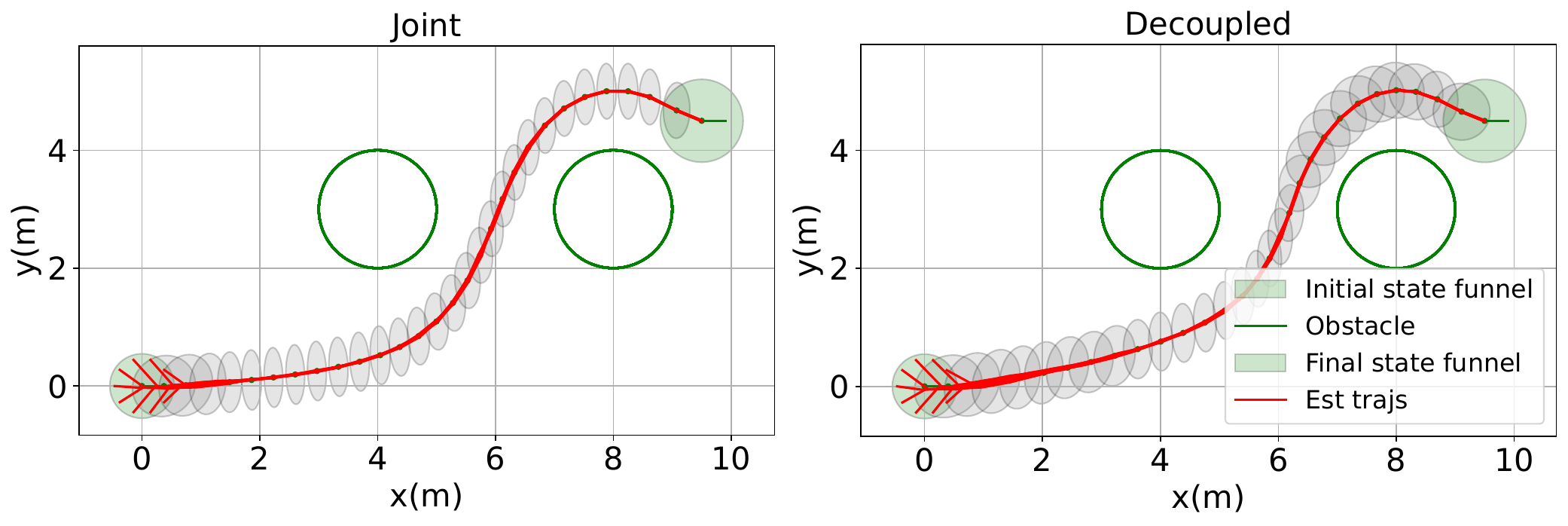}
\caption{State and observer funnels with isolated effects for case 1 (upper) and case 2 (lower) over two approaches.}
\label{fig:funnel comparison}
\end{figure}

\begin{figure}
\centering
\includegraphics[width=0.46\textwidth]{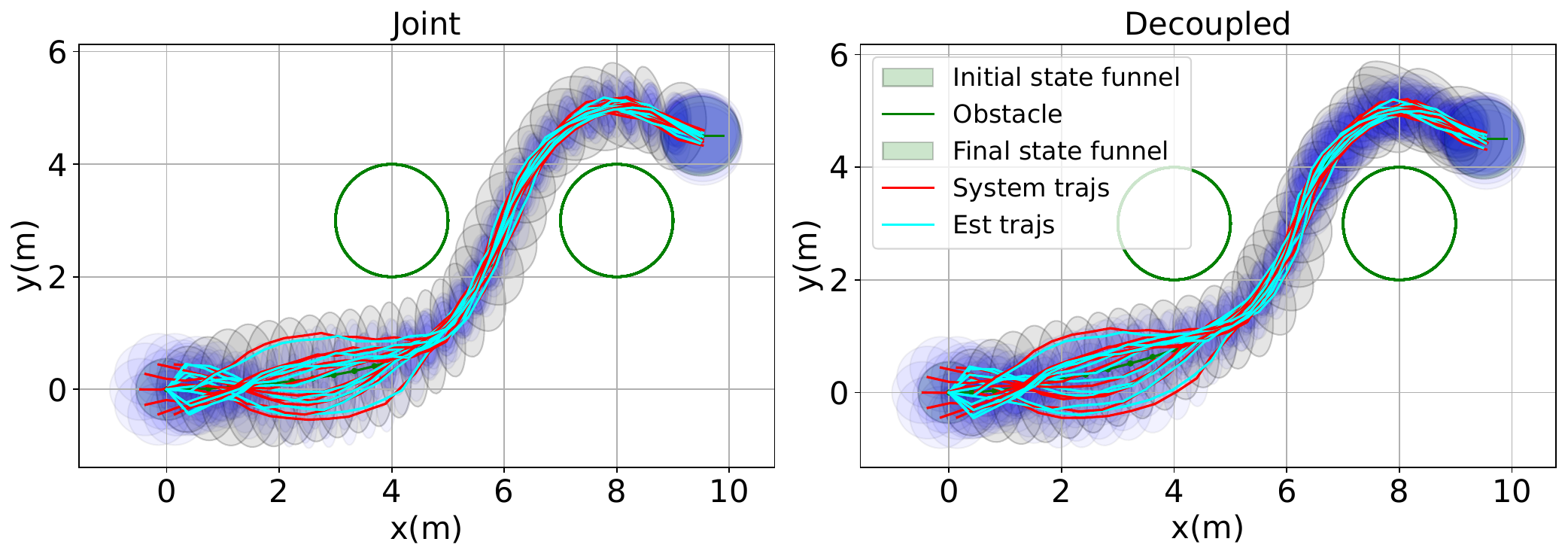}
\includegraphics[width=0.46\textwidth]{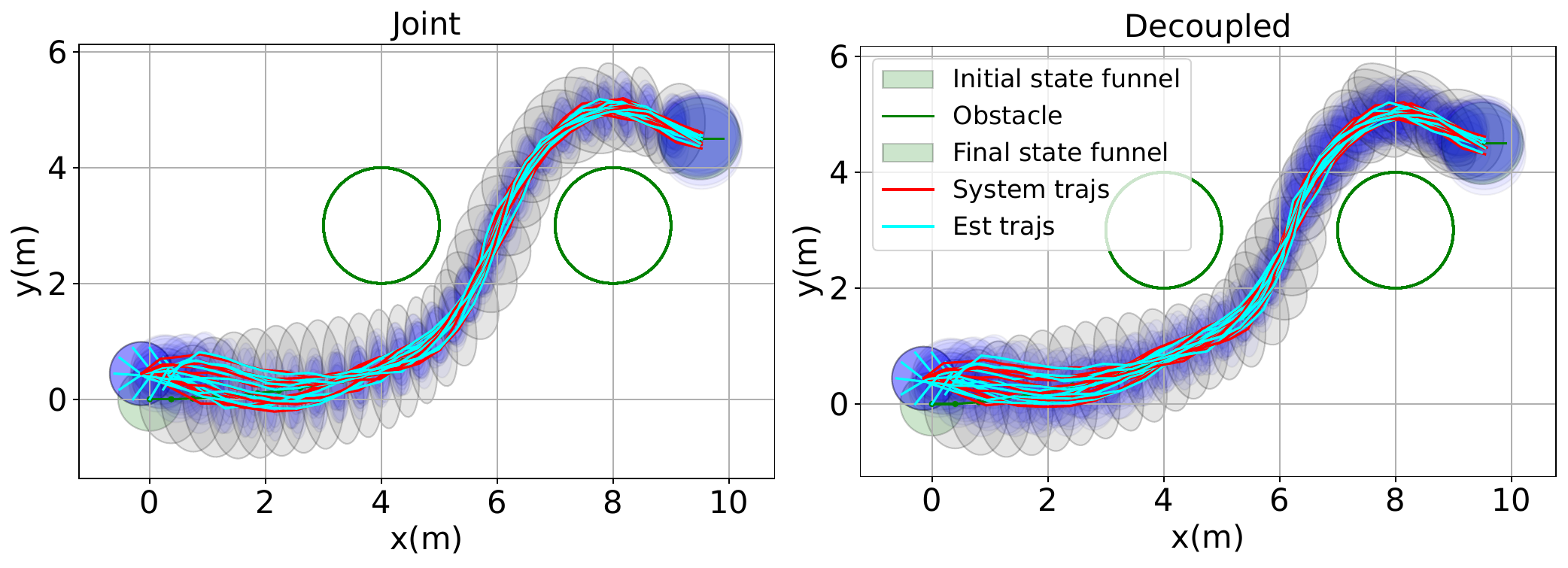}
\caption{Combined state and observer funnels for case 3 (upper), and case 4 (lower) over two approaches.}
\label{fig:combined funnels}
\end{figure}

\begin{figure}
\centering
\includegraphics[width=0.45\textwidth]{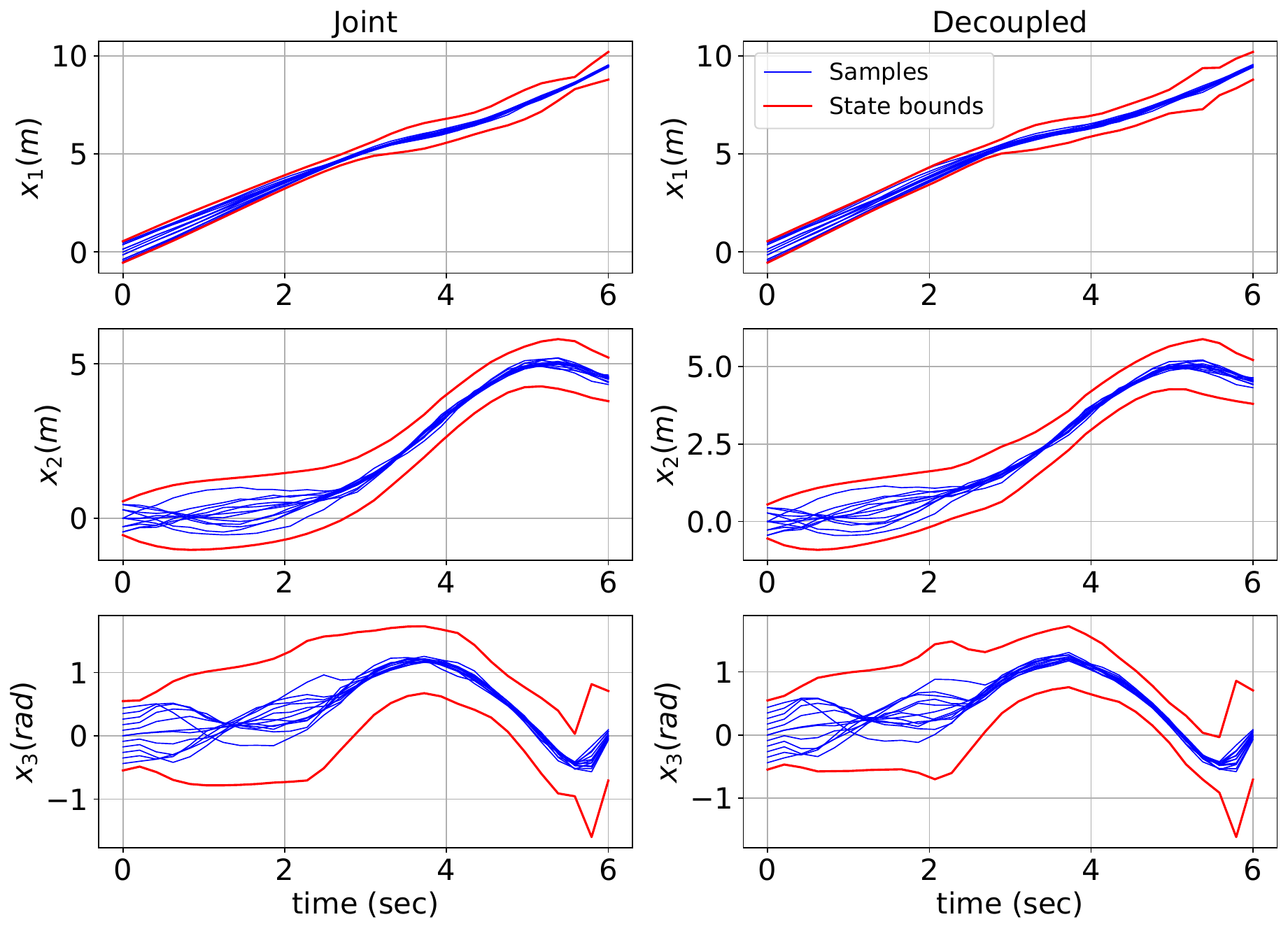}
\includegraphics[width=0.45\textwidth]{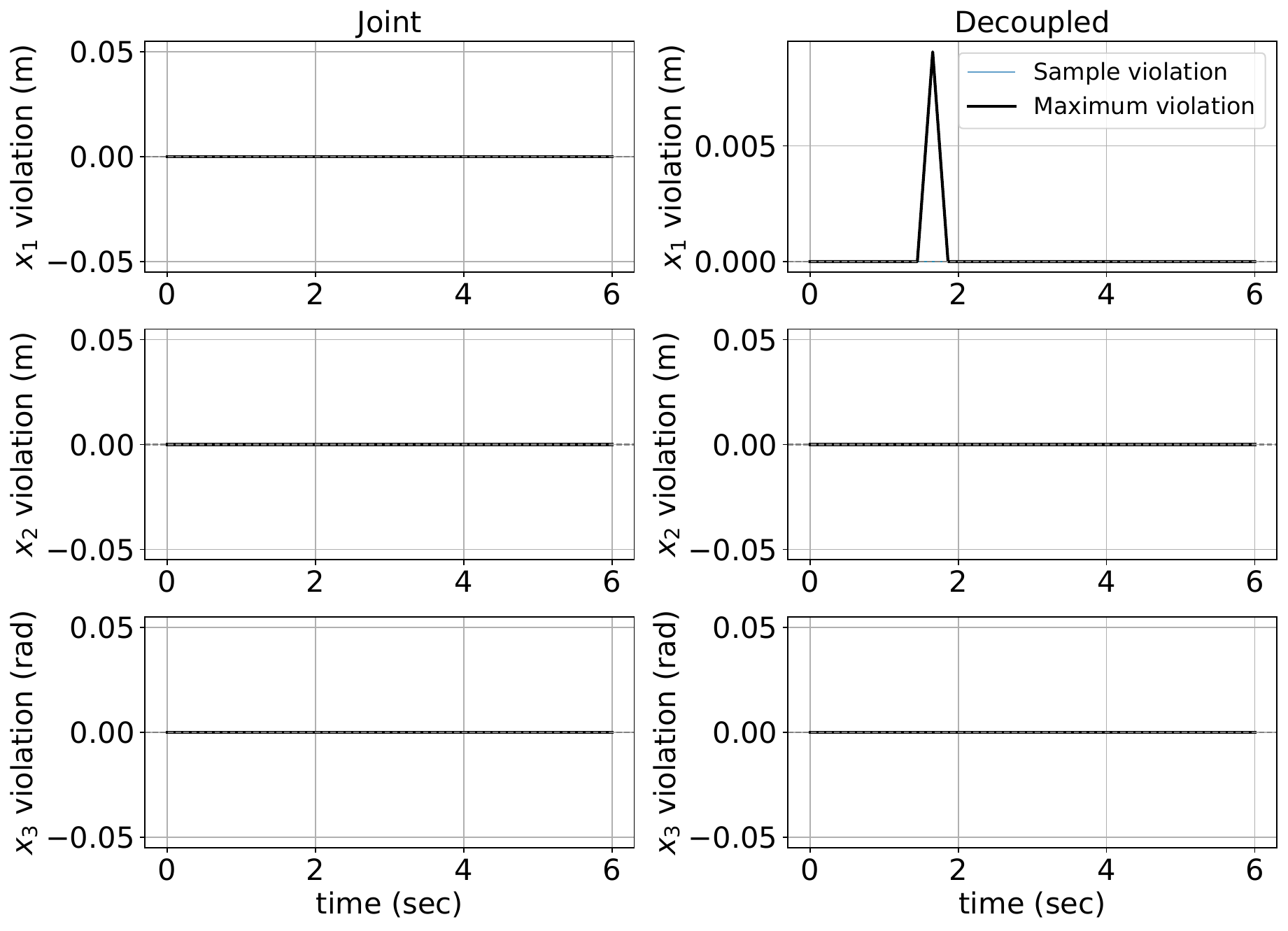}
\caption{State bounds and invariance-violation time histories for case 3 under two approaches.}
\label{fig:states bounds}
\end{figure}

\section{Concluding}
\label{sec:conclusion}
We present a unified framework for jointly synthesizing the guidance and robust observer-based output-feedback problems by combining LMI-based Lyapunov stability and SCP techniques. Compared with prior works that solve this problem in two or three steps (guidance-robust feedback, robust observer), this work mitigates the feasibility issues of traditional approaches by considering the joint synthesis as a unified design problem. In this direction, by explicitly expressing the state and observer funnels, one can establish the invariance properties separately, in contrast to traditional approaches that use a single funnel accounting for both state and estimation deviations. The resulting funnels can also serve as a tool to examine the worst-case scenarios for a reference trajectory and the corresponding feedback gains.
Lastly, numerical results on a unicycle system demonstrate the effectiveness of the proposed framework to address the presence of process and sensing uncertainties in the guidance, observer, and control design problems.
\bibliographystyle{ieeetr}
\bibliography{references}

@book{rudin,
    author = {Walter Rudin},
    title = {Principles of Mathematical Analysis} ,
    publisher = {McGraw Hill},
    year = {1953}
}

@phdthesis{taewan,
    author = {Taewan Kim} ,
    title = {Robust Predictive Control for Uncertain Nonlinear Systems via Funnel Synthesis},
    school = {University of Washington},
    year = {2025}
}

@article{bilevel1,
author = {Gorski, Jochen and Pfeuffer, Frank and Klamroth, Kathrin},
year = {2007},
month = {11},
pages = {373-407},
title = {Biconvex Sets and Optimization with Biconvex Functions: A survey and extensions},
volume = {66},
journal = {Mathematical Methods of Operations Research},
doi = {10.1007/s00186-007-0161-1}
}

@article{taylor,
author = {Taylor Reynolds and Danylo Malyuta and Mehran Mesbahi and Behcet Acikmese and John M. Carson},
title = {Funnel Synthesis for the 6-{DOF} Powered Descent Guidance Problem},
journal = {AIAA Scitech 2021 Forum},
year = {2021},
doi = {10.2514/6.2021-0504}
}

@phdthesis{taylor_phd,
    author = {Taylor Patrick Reynolds},
    title = {Computational Guidance and Control for Aerospace Systems},
    school = {University of Washington},
    year = {2021}
}

@article{hypersonic,
author = {Mceowen, Skye and Calderone, Daniel J. and Tiwary, Aman and Zhou, Jason S. K. and Kim, Taewan and Elango, Purnanand and A\c{c}\i{}kme\c{s}e, Beh\c{c}et},
title = {Autotuned Primal–Dual Successive Convexification for Reentry Guidance},
journal = {Journal of Guidance, Control, and Dynamics},
volume = {48},
number = {9},
pages = {2020-2040},
year = {2025},
doi = {10.2514/1.G008692}
}

@incollection{ch10,
author = {Bazaraa, M.S. and Sherali, H.D. and Shetty, C.M.},
publisher = {John Wiley and Sons, Ltd},
isbn = {9780471787778},
title = {Nonlinear Programming},
chapter = {10: Methods of Feasible Directions},
pages = {537-653},
doi = {10.1002/0471787779.ch10},
year = {2006}
}

@ARTICLE{scvx_main,
  author={Malyuta, Danylo and Reynolds, Taylor P. and Szmuk, Michael and Lew, Thomas and Bonalli, Riccardo and Pavone, Marco and Açıkmeşe, Behçet},
  journal={IEEE Control Systems Magazine}, 
  title={Convex Optimization for Trajectory Generation: A Tutorial on Generating Dynamically Feasible Trajectories Reliably and Efficiently}, 
  year={2022},
  volume={42},
  number={5},
  pages={40-113},
  doi={10.1109/MCS.2022.3187542}}

@article{Moreau_regularization,
author = {Lemar\'{e}chal, Claude and Sagastiz\'{a}bal, Claudia},
title = {Practical Aspects of the Moreau--Yosida Regularization: Theoretical Preliminaries},
journal = {SIAM Journal on Optimization},
volume = {7},
number = {2},
pages = {367-385},
year = {1997},
doi = {10.1137/S1052623494267127}
}

@article{twewan1,
  title={Joint synthesis of trajectory and controlled invariant funnel for discrete‐time systems with locally {L}ipschitz nonlinearities},
  author={Taewan Kim and Purnanand Elango and Behçet Açikmese},
  journal={International Journal of Robust and Nonlinear Control},
  year={2024},
  volume={34},
  pages={4157 - 4176},
  url={https://api.semanticscholar.org/CorpusID:252118592}
}

@article{bechet,
title = {Stability analysis with quadratic {L}yapunov functions: Some necessary and sufficient multiplier conditions},
journal = {Systems and Control Letters},
volume = {57},
number = {1},
pages = {78-94},
year = {2008},
issn = {0167-6911},
doi = {https://doi.org/10.1016/j.sysconle.2007.06.018},
url = {https://www.sciencedirect.com/science/article/pii/S0167691107000941},
author = {Behçet Açıkmeşe and Martin Corless},
}

@inbook{QC,
author = {Stephen Boyd and Laurent El Ghaoui and Eric Feron and Venkataramanan Balakrishnan},
title = {Linear Matrix Inequalities in System and Control Theory},
chapter = {8. Lur'e and Multiplier Methods},
pages = {119-129},
doi = {10.1137/1.9781611970777.ch8},
URL = {https://epubs.siam.org/doi/abs/10.1137/1.9781611970777.ch8},
eprint = {https://epubs.siam.org/doi/pdf/10.1137/1.9781611970777.ch8},
year = {1994}
}

@article{majumdar_tedrake,
  author    = {Anirudha Majumdar and Russ Tedrake},
  title     = {Funnel Libraries for Real-Time Robust Feedback Motion Planning},
  journal   = {The International Journal of Robotics Research},
  volume    = {36},
  number    = {8},
  pages     = {947--982},
  year      = {2017},
  doi       = {10.1177/0278364917712421}
}

@article{leeman,
  author    = {Antoine P. Leeman and Johannes K\"{o}hler and Andrea Zanelli
               and Samir Bennani and Melanie N. Zeilinger},
  title     = {Robust Nonlinear Optimal Control via System Level Synthesis},
  journal   = {IEEE Transactions on Automatic Control},
  year      = {2025},
}

@article{manchester_kuindersma,
  author    = {Ian R. Manchester and Scott Kuindersma},
  title     = {Robust Direct Trajectory Optimization Using Approximate
               Invariant Funnels},
  journal   = {Autonomous Robots},
  volume    = {43},
  number    = {2},
  pages     = {375--387},
  year      = {2019},
  doi       = {10.1007/s10514-018-9779-5}
}

@article{atassi_khalil,
  author    = {A. N. Atassi and H. K. Khalil},
  title     = {A Separation Principle for the Stabilization of a Class of
               Nonlinear Systems},
  journal   = {IEEE Transactions on Automatic Control},
  volume    = {44},
  number    = {9},
  pages     = {1672--1687},
  year      = {1999},
  doi       = {10.1109/9.788534}
}

@article{mayne_tube,
  author    = {David Q. Mayne and Mar{\'i}a M. Seron and Sa{\v{s}}a V.
               Rakovi{\'c}},
  title     = {Robust Model Predictive Control of Constrained Linear Systems
               with Bounded Disturbances},
  journal   = {Automatica},
  volume    = {41},
  number    = {2},
  pages     = {219--224},
  year      = {2005},
  doi       = {10.1016/j.automatica.2004.08.019}
}

@article{mayne_of,
  author    = {David Q. Mayne and Sa{\v{s}}a V. Rakovi{\'c} and Rolf
               Findeisen and Frank Allg\"{o}wer},
  title     = {Robust Output Feedback Model Predictive Control of Constrained
               Linear Systems},
  journal   = {Automatica},
  volume    = {42},
  number    = {7},
  pages     = {1217--1222},
  year      = {2006},
  doi       = {10.1016/j.automatica.2006.03.005}
}

@article{xu_obs,
  author    = {Xiangru Xu and Beh{\c{c}}et A{\c{c}}{\i}kmese and
               Martin J. Corless},
  title     = {Observer-Based Controllers for Incrementally Quadratic
               Nonlinear Systems with Disturbances},
  journal   = {IEEE Transactions on Automatic Control},
  volume    = {66},
  number    = {3},
  pages     = {1129--1143},
  year      = {2021},
  doi       = {10.1109/TAC.2020.2996985}
}

@article{ccm,
  author    = {Ian R. Manchester and Jean-Jacques E. Slotine},
  title     = {Control Contraction Metrics: Convex and Intrinsic Criteria
               for Nonlinear Feedback Design},
  journal   = {IEEE Transactions on Automatic Control},
  volume    = {62},
  number    = {6},
  pages     = {3046--3053},
  year      = {2017},
  doi       = {10.1109/TAC.2017.2668380}
}

@inproceedings{singh_contraction,
  author    = {Sumeet Singh and Anirudha Majumdar and Jean-Jacques E. Slotine
               and Marco Pavone},
  title     = {Robust Online Motion Planning via Contraction Theory and
               Convex Optimization},
  booktitle = {Proceedings of the IEEE International Conference on Robotics
               and Automation (ICRA)},
  pages     = {5883--5890},
  year      = {2017},
  doi       = {10.1109/ICRA.2017.7989693}
}

@article{luenberger,
  author  = {David G. Luenberger},
  title   = {Observers for Multivariable Systems},
  journal = {{IEEE} Transactions on Automatic Control},
  volume  = {11},
  number  = {2},
  pages   = {190--197},
  year    = {1966},
  doi     = {10.1109/TAC.1966.1098323}
}

@article{acikmese_ploen,
  author  = {Beh{\c{c}}et A{\c{c}}{\i}kme{\c{s}}e and Scott R. Ploen},
  title   = {Convex Programming Approach to Powered Descent Guidance
             for {Mars} Landing},
  journal = {Journal of Guidance, Control, and Dynamics},
  volume  = {30},
  number  = {5},
  pages   = {1353--1366},
  year    = {2007},
  doi     = {10.2514/1.27553}
}

@article{mayne_mpc,
  author  = {David Q. Mayne and James B. Rawlings and Christopher V. Rao
             and Pierre O. M. Scokaert},
  title   = {Constrained Model Predictive Control: Stability and Optimality},
  journal = {Automatica},
  volume  = {36},
  number  = {6},
  pages   = {789--814},
  year    = {2000},
  doi     = {10.1016/S0005-1098(99)00214-9}
}

@article{lohmiller_slotine,
  author  = {Winfried Lohmiller and Jean-Jacques E. Slotine},
  title   = {On Contraction Analysis for Nonlinear Systems},
  journal = {Automatica},
  volume  = {34},
  number  = {6},
  pages   = {683--696},
  year    = {1998},
  doi     = {10.1016/S0005-1098(98)00019-3}
}

@article{rakovic_mrpi,
  author  = {Sa{\v{s}}a V. Rakovi{\'c} and Eric C. Kerrigan and
             Konstantinos I. Kouramas and David Q. Mayne},
  title   = {Invariant Approximations of the Minimal Robust Positively
             Invariant Set},
  journal = {IEEE Transactions on Automatic Control},
  volume  = {50},
  number  = {3},
  pages   = {406--410},
  year    = {2005},
  doi     = {10.1109/TAC.2005.843854}
}

@inproceedings{herbert_fastrack,
  author    = {Sylvia L. Herbert and Mo Chen and SooJean Han and
               Somil Bansal and Jaime F. Fisac and Claire J. Tomlin},
  title     = {{FaSTrack}: A Modular Framework for Fast and Guaranteed
               Safe Motion Planning},
  booktitle = {Proceedings of the IEEE Conference on Decision and Control
               (CDC)},
  pages     = {1517--1522},
  year      = {2017},
  doi       = {10.1109/CDC.2017.8263867}
}

\end{document}